\theoremstyle{plain}
\newtheorem{theorem}{Theorem}[section]
\newtheorem{lemma}[theorem]{Lemma}
\newtheorem{cor}[theorem]{Corollary}
\theoremstyle{definition}
\newtheorem{definition}[theorem]{Definition}
\theoremstyle{remark}
\newtheorem{remark}[theorem]{Remark}
\def\co{\colon\thinspace}
\def\barnu{\bar{\nu}}
\begin{document}
\title{Spectral numbers in Floer theories}\author{Michael Usher}\email{musher@math.princeton.edu}\address{Princeton University\\Fine Hall\\Washington Road\\Princeton, NJ 08544} \subjclass{53D40, 57R58, 11J61}
\keywords{Floer homology, Novikov ring, action spectrum, non-Archimedean approximation}
\begin{abstract}
The chain complexes underlying Floer homology theories typically carry a real-valued filtration, allowing one to associate to each Floer homology class a \emph{spectral number} defined as the infimum of the filtration levels of chains representing that class.  These spectral numbers have been studied extensively in the case of Hamiltonian Floer homology by Oh, Schwarz, and others.  We prove that the spectral number associated to any nonzero Floer homology class is always finite, and that the infimum in the definition of the spectral number is always attained.  In the Hamiltonian case, this implies that what is known as the ``nondegenerate spectrality'' axiom holds on all closed symplectic manifolds.  Our proofs are entirely algebraic  and apply to any Floer-type theory (including Novikov homology) satisfying certain standard formal properties.   The key ingredient is a theorem about the existence of best approximations of arbitrary elements of finitely generated free modules over Novikov rings by elements of prescribed submodules with respect to a certain family of non-Archimedean metrics. 
\end{abstract}
\maketitle
\section{Introduction}

In the various guises of Floer homology, or indeed its forerunners Morse homology and Novikov homology, one obtains a chain complex $C_*$ from the critical points of an ``action functional'' $\mathcal{A}$ on some configuration space, with boundary operator obtained from an enumeration of certain objects that are interpreted as negative gradient flowlines of $\mathcal{A}$.  There is generally some set of allowable perturbations of $\mathcal{A}$, with any two choices of perturbation giving rise to canonically isomorphic homology groups $H_*$. However, the nature of the construction implies that the chain complex also carries a natural \emph{filtration} by $\mathbb{R}$, with the subcomplex $C_{*}^{\lambda}$ generated by the critical points having action at most $\lambda$; typically the homologies $H_{*}^{\lambda}$ of these filtered groups are \emph{not} independent of the way in which $\mathcal{A}$ is perturbed.  Now in any chain complex carrying a filtration by $\mathbb{R}$, to each homology class $a$ of the complex one can associate a \emph{spectral number}, defined as the infimum of all $\lambda$ with the property that $a$ lies in the image of the inclusion-induced map $H_{*}^{\lambda}\to H_*$.  These spectral numbers have been studied in some detail in the case of \emph{Hamiltonian Floer homology} (see \cite{Oh} for a survey); in this case the allowed perturbations of the action functional correspond to Hamiltonian flows on a symplectic manifold, and the properties of the spectral numbers have yielded interesting information about Hamiltonian dynamics.  

The work described in the present note was motivated by the work of Oh, et al., concerning spectral numbers in Hamiltonian Floer homology.  The result of greatest interest to the Hamiltonian case is that what is known as the ``nondegenerate spectrality'' axiom holds on general closed symplectic manifolds.  This result  is also proven in  \cite{Oh3} in the case that the manifold is strongly semipositive.  In addition to not depending on semipositivity, our proof is conceptually quite different, and a good deal shorter.  As explained in Section 6.1 of \cite{Oh},  the nondegenerate spectrality axiom implies that the spectral number of a Hamiltonian flow is unchanged when the corresponding path of Hamiltonian diffeomorphisms is homotoped rel endpoints, and thus gives rise not just to a function defined on Hamiltonian functions but also to a function defined on the universal cover of the Hamiltonian diffeomorphism group.  With this fact in hand, many of the results about Hamiltonian dynamics that were proven using the spectral numbers for special classes of symplectic manifolds now become accessible for general symplectic manifolds.  (For instance, one can verify that the proofs in \cite{EP}, when combined with the nondegenerate spectrality axiom, go through to show that there exists a ``partial symplectic quasi-state'' on any closed symplectic manifold $(M,\omega)$.  As is shown in the proof of that paper's Theorem 2.1, this has the striking consequence that, if $\{F_1,\ldots,F_m\}$ is any finite collection of mutually Poisson-commuting functions on $M$, then there is some $(x_1,\ldots,x_m)\in\mathbb{R}^{m}$ such that the set $\cap_{i=1}^{m}f_{i}^{-1}(\{x_i\})$ cannot be displaced from itself by a Hamiltonian isotopy). 

However, the principal ingredient for the results in this paper is an algebraic result that is insensitive to the particular flavor of Floer homology under consideration.  Accordingly we obtain results applicable to many different theories, among which we mention in particular the fact that the spectral number is always nontrivial (\emph{i.e.}, not equal to $-\infty$) for any nonzero Floer homology class.  This suggests that these numbers may be worthy of study in contexts other than Hamiltonian Floer homology.  

In order to formulate our results in general terms, we now give a purely algebraic description of the context in which the results will apply.

\begin{definition} A \textbf{filtered Floer-Novikov complex} $\mathfrak{c}$  over a ring $R$ consists of the following data:\begin{enumerate}
\item[(1)] A principal $\Gamma$-bundle (with the discrete topology) \begin{align*} \Gamma\circlearrowright & P \\ & \downarrow \\ & S\end{align*}
where \begin{itemize} \item[(i)] $S$ is a finite set, and \item[(ii)] $\Gamma$ is a finitely generated  abelian group, written multiplicatively;\end{itemize}
\item[(2)] An ``action functional'' $\mathcal{A}\co P\to \mathbb{R}$ and a ``period homomorphism'' $\omega\co \Gamma\to \mathbb{R}$ satisfying \[ \mathcal{A}(g\cdot p)=\mathcal{A}(p)-\omega(g) \quad (g\in \Gamma,p\in P).\]
\item[(3)] A map $n\co P\times P\to R$ satisfying the following conditions: 
\begin{itemize}\item[(i)] $n(p,p')=0$ unless  $\mathcal{A}(p)>\mathcal{A}(p')$
\item[(ii)] $n(g\cdot p,g\cdot p')=n(p,p')$ for all $p,p'\in P,g\in \Gamma$
\item[(iii)] For each $p\in P$, the formal sum \[ \partial p=\sum_{q\in P}n(p,q)q\] belongs to the \emph{Floer chain complex} \[ C_*(\mathfrak{c}):=\big\{\sum_{q\in P}a_qq|a_q\in R,(\forall C\in\mathbb{R})(\#\{q|a_q\neq 0,\mathcal{A}(q)>C\}<\infty) \big\}\]
\item[(iv)] Where the \emph{Novikov ring} of $\omega\co\Gamma\to\mathbb{R}$ is defined by \[ \Lambda_{\Gamma,\omega}=\big\{\sum_{g\in \Gamma}b_gg|b_g\in R,(\forall C\in\mathbb{R})(\#\{g|b_g\neq 0,\omega(g)<C\}<\infty)\big\}\] 
and where $C_*$ inherits the structure of a $\Lambda_{\Gamma,\omega}$-module in the obvious way from the $\Gamma$-action on $P$, the operator $\partial\co P\to C_*$ defined in (iii) extends to a 
$\Lambda_{\Gamma,\omega}$-module homomorphism  \[ \partial\co C_*\to C_* \mbox{  which moreover satisfies } \partial\circ\partial=0.\]
\end{itemize}
\end{enumerate}
\end{definition}

Note that we use a ``downward completion'' with respect to $\mathcal{A}$ to define the Floer chain complex but an upward completion with respect to $\omega$ to define the Novikov ring; this is consistent with the minus sign that appears in (2) above.  

If $\lambda\in\mathbb{R}$, define  \[ C^{\lambda}_{*}(\mathfrak{c})=\big\{\sum_{q\in P:\mathcal{A}(q)\leq\lambda}a_qq|a_q\in R,(\forall C\in\mathbb{R})(\#\{q|a_q\neq 0,\mathcal{A}(q)>C\}<\infty) \big\} \] 
The condition 3(i) in the definition of a  filtered Floer-Novikov complex implies that the boundary operator $\partial$ restricts to maps \[ \partial\co C^{\lambda}_{*}(\mathfrak{c})\to C^{\lambda}_{*}(\mathfrak{c}).\]

So set \begin{align*} 
H_{*}(\mathfrak{c})=\frac{\ker(\partial \co C_*(\mathfrak{c})\to C_*(\mathfrak{c}))}{Im(\partial \co C_*(\mathfrak{c})\to C_*(\mathfrak{c}))},
\quad 
H_{*}^{\lambda}(\mathfrak{c})=\frac{\ker(\partial \co C_{*}^{\lambda}(\mathfrak{c})\to C_{*}^{\lambda}(\mathfrak{c}))}{Im(\partial \co C_{*}^{\lambda}(\mathfrak{c})\to C_{*}^{\lambda}(\mathfrak{c}))}.
\end{align*}

 We then have maps $\iota_*\co H_{*}^{\lambda}(\mathfrak{c})\to H_{*}(\mathfrak{c})$ induced by the inclusion of $C_{*}^{\lambda}(\mathfrak{c})$.  

\begin{definition} If $\mathfrak{c}$ is a   filtered Floer-Novikov complex with the notation as above, and if $a\in H_*(\mathfrak{c})$, the \emph{spectral number} of $a$ is \[ \rho(a)=\inf\{\lambda\in\mathbb{R}|a\in Im(\iota_*\co H_{*}^{\lambda}(\mathfrak{c})\to H_{*}(\mathfrak{c}))\}.\]
\end{definition}

For any nonzero $c=\sum_{p\in P}c_pp\in C_{*}(\mathfrak{c})$, the set $\{\mathcal{A}(p)|c_p\neq 0\}$ is bounded above, nonempty, and discrete and hence contains its supremum, which we denote by $\ell(c)$ (if $c=0$, put $\ell(c)=-\infty$).  We have thus defined a function \[ \ell\co C_*(\mathfrak{c})\to\mathbb{R}\cup\{-\infty\} \] such that \[ C_{*}^{\lambda}(\mathfrak{c})=\{c\in C_{*}(\mathfrak{c})|\ell(c)\leq\lambda\}.\]  An equivalent definition of the spectral number is then \[ \rho(a)=\inf\{\ell(c)|c\in C_*(\mathfrak{c}),[c]=a\}\] where $[c]$ denotes the homology class of $c$.

We can now state our main results.

\begin{theorem} \label{nontriv}  Let $\mathfrak{c}$ be a filtered Floer-Novikov complex over a Noetherian ring $R$.  Then for any $a\in H_*(\mathfrak{c})$ such that $a\neq 0$, we have $\rho(a)>-\infty$.  Further, there is $M\in\mathbb{R}$ such that for any $c\in C_*(\mathfrak{c})$ with $[c]=0$ there is $h\in C_*(\mathfrak{c})$ with $\partial h=c$ and $\ell(h)\leq \ell(c)+M$.\end{theorem}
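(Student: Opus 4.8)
The plan is to translate $\mathfrak{c}$ into linear algebra over the Novikov ring and reduce both assertions of Theorem~\ref{nontriv} to a single best-approximation property. Choosing a set-theoretic section of $P\to S$ gives a basis $e_1,\dots,e_n$ of $C_*(\mathfrak{c})$ over $\Lambda:=\Lambda_{\Gamma,\omega}$, and writing $\nu$ for the tautological valuation on $\Lambda$ (so $\nu(\sum_g b_g g)=\min\{\omega(g):b_g\neq 0\}$) one checks directly from the definitions that $\ell\big(\sum_i\lambda_i e_i\big)=\max_i\big(\mathcal{A}(e_i)-\nu(\lambda_i)\big)$; thus $\ell$ is exactly the non-Archimedean filtration attached to the weights $w_i=\mathcal{A}(e_i)$ on this basis, and condition~3(i) makes $\partial$ into a $\Lambda$-linear endomorphism of $C_*(\mathfrak{c})$ with $\ell(\partial c)\le\ell(c)$. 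Both assertions now concern the submodules $\mathrm{Im}\,\partial$ and $\ker\partial$ of $C_*(\mathfrak{c})$ together with the ultrametric measuring $\ell(c-c')$.

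The key input — and the step I expect to carry essentially all of the work — is the following: for any finitely generated free module $F$ over $\Lambda_{\Gamma,\omega}$ carrying a weighted-basis filtration $\ell_F$ of the type above, and any submodule $N\le F$, every $x\in F$ admits a best approximation in $N$, meaning that $\inf\{\ell_F(x-y):y\in N\}$ is attained. I would prove this by induction on $\mathrm{rank}\,F$; the hypothesis that $R$ is Noetherian enters precisely here, to guarantee that $N$ is finitely generated, so that one may fix a finite generating set of $N$, reduce the rank one coordinate at a time, and push the ultrametric estimates through using the structure of the Novikov ring. Granting this, observe that $\inf\{\ell_F(x-y):y\in N\}>-\infty$ whenever $x\notin N$, since at the minimizer $y_0$ one has $x-y_0\neq 0$.

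Now for the first assertion: if $a\neq 0$, choose a cycle $z$ with $[z]=a$, so that $z\notin\mathrm{Im}\,\partial$. Every chain carrying the class $a$ has the form $z-\partial w$, whence, by the equivalent formula for $\rho$, $\rho(a)=\inf\{\ell(z-\partial w):w\in C_*(\mathfrak{c})\}=\inf\{\ell(z-y):y\in\mathrm{Im}\,\partial\}$ is the distance from $z$ to the submodule $\mathrm{Im}\,\partial$; by the key input this infimum is attained, and since $z\notin\mathrm{Im}\,\partial$ it is $>-\infty$ (and, as a bonus, the infimum defining $\rho(a)$ is attained). For the second assertion, the Noetherian hypothesis makes $\mathrm{Im}\,\partial$ finitely generated; fix generators $c_1,\dots,c_m$ and, applying the key input to the submodule $\ker\partial$, choose for each $j$ a primitive $h_j$ with $\partial h_j=c_j$ of least filtration level. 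Given $c=\sum_j\mu_j c_j\in\mathrm{Im}\,\partial$, the chain $h=\sum_j\mu_j h_j$ satisfies $\partial h=c$ and $\ell(h)\le\max_j\big(\ell(h_j)-\nu(\mu_j)\big)\le M_1+\max_j\big(\ell(c_j)-\nu(\mu_j)\big)$, where $M_1=\max_j\big(\ell(h_j)-\ell(c_j)\big)$. It therefore suffices to arrange, for every $c$, a representation $c=\sum_j\mu_j c_j$ with $\max_j\big(\ell(c_j)-\nu(\mu_j)\big)\le\ell(c)+M_2$ for one constant $M_2$ independent of $c$; equivalently, the quotient filtration on $\mathrm{Im}\,\partial\cong\Lambda^m/\ker(\Lambda^m\to\mathrm{Im}\,\partial)$ must coincide, up to a bounded additive error, with the subspace filtration inherited from $C_*(\mathfrak{c})$ — that is, the surjection $\Lambda^m\to\mathrm{Im}\,\partial$ must be strict. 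This last point is a non-Archimedean open-mapping phenomenon that I would again extract from the best-approximation property together with the completeness encoded in the Novikov finiteness conditions; it is the one remaining obstacle, and with it in hand $M=M_1+M_2$ completes the proof.
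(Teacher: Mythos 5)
Your overall architecture matches the paper's: identify $C_*(\mathfrak{c})$ with $\Lambda_{\Gamma,\omega}^n$, express $\ell$ as a weighted valuation, and reduce everything to a best-approximation statement for submodules of finite-rank free modules over the Novikov ring (the paper's Theorem \ref{alg}); your treatment of the first assertion is then exactly the paper's. However, two steps you flag as "the work" are genuinely where the proof lives, and your sketches of them do not go through as stated. For the key input itself, you claim that Noetherianness of $R$ guarantees that an arbitrary submodule $N$ of $\Lambda_K(G)^n$ is finitely generated over $\Lambda_K(G)$. That is not automatic: when $G$ is dense, $\Lambda_K(G)$ is not visibly Noetherian (it is neither a polynomial nor a power-series ring in finitely many variables over $K=R[\ker\omega]$). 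The paper obtains finite generation indirectly, by passing to the $K$-module $\tilde N=N_{\geq 0}/N_+$, a submodule of $K^n$ and hence finitely generated over the Noetherian ring $K$, and then proving (Lemma \ref{span}, via the iterative fixed-point Lemma \ref{fixedpt}) that lifts of generators of $\tilde N$ normalized to have $\barnu=0$ generate $N_{\geq 0}$ over $\Lambda_K(G)_{\geq 0}$. The existence of the best approximation (Lemma \ref{tzero}) is a second application of the same iteration, whose convergence rests on a discreteness argument for the set of exponents that can appear; a naive "induction on rank, one coordinate at a time" does not engage with this, and it is precisely the point at which Remark \ref{non-noeth}'s counterexample shows something can fail.

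The second, larger gap is the one you name yourself: the "strictness up to bounded error" of $\Lambda^m\to\mathrm{Im}\,\partial$ needed for the constant $M$. This is not a routine consequence of best approximation plus completeness; it is essentially the quantitative clause of Theorem \ref{alg} (the existence of $\gamma$ with $\barnu(x_0)\geq\barnu_{\vec{t}}(w)-\gamma$), and the paper proves it by a specific device: take generators $u_1,\dots,u_k$ of $(\mathrm{Im}\,A)_{\geq 0}$ over $\Lambda_K(G)_{\geq 0}$ with $\barnu(u_i)=0$ (this normalization again requires Lemma \ref{span}), choose preimages $x_i$ with $Ax_i=u_i$, and set $-\gamma=\min_i\barnu(x_i)$; then any $u$ in the image is a $\Lambda_K(G)_{\geq 0}$-combination of the $T^{\barnu(u)}u_i$, so the corresponding combination of the $T^{\barnu(u)}x_i$ is a preimage with controlled valuation. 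Your construction --- filtration-minimizing primitives $h_j$ of an arbitrary finite generating set $c_1,\dots,c_m$ of $\mathrm{Im}\,\partial$ --- does not yield this, because a general $c$ need not admit a representation $\sum_j\mu_jc_j$ with $\max_j(\ell(c_j)-\nu(\mu_j))$ within a bounded distance of $\ell(c)$ unless the generating set is adapted to the filtration in the above sense. So the missing step is real, and closing it requires the reduction-to-$K$ machinery rather than the qualitative approximation statement alone.
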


\begin{theorem}  \label{tight} Let $\mathfrak{c}$ be a  filtered Floer-Novikov complex over a Noetherian ring $R$.   Then for every $a\in H_*(\mathfrak{c})$ there is $\alpha\in C_*(\mathfrak{c})$ such that $[\alpha]=a$ and \[ \ell(\alpha)=\rho(a).\] \end{theorem}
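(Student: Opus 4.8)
The plan is to recast Theorem~\ref{tight} as a best-approximation statement and then invoke the algebraic approximation theorem that forms the technical core of this paper. If $a=0$ we may simply take $\alpha=0$, since $\ell(0)=-\infty=\rho(0)$; so assume $a\neq 0$, and recall from Theorem~\ref{nontriv} that then $\rho(a)\in\mathbb{R}$. Fix any cycle $z\in C_*(\mathfrak{c})$ with $\partial z=0$ and $[z]=a$, and set $B:=\mathrm{Im}\,(\partial\co C_*(\mathfrak{c})\to C_*(\mathfrak{c}))$, a $\Lambda_{\Gamma,\omega}$-submodule of $C_*(\mathfrak{c})$. Since $\partial\circ\partial=0$ we have $B\subseteq\ker\partial$, and $[c]=a$ holds exactly when $c\in z+B$; as $-b$ ranges over $B$ while $b$ does, the chain-level description of $\rho$ from the excerpt gives
\[\rho(a)=\inf\{\ell(z+b)\mid b\in B\}=\inf\{\ell(z-b)\mid b\in B\},\]
so that $\rho(a)$ is precisely the distance from $z$ to the submodule $B$ in the non-Archimedean metric determined by $\ell$ (note $z\notin B$, since otherwise $a=[z]=0$, so every $\ell(z-b)$ is finite and, by Theorem~\ref{nontriv}, the displayed infimum is over a set of reals bounded below). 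If we can produce a \emph{best approximation}, i.e.\ some $b_0\in B$ with $\ell(z-b_0)=\rho(a)$, then $\alpha:=z-b_0$ is a cycle with $[\alpha]=[z]-[b_0]=a$ — because $b_0$, lying in $\mathrm{Im}\,\partial$, is a boundary — and $\ell(\alpha)=\rho(a)$, which is exactly what is wanted. I stress that this is a genuine infimum: when $\omega(\Gamma)$ is dense in $\mathbb{R}$ the set of values taken by $\ell$ is dense as well, so the conclusion is by no means automatic.

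It therefore remains to produce $b_0$, and the plan is to verify that we are precisely in the situation addressed by the approximation theorem. Choosing a representative $p_i\in P$ ($i=1,\dots,k$) in each of the finitely many $\Gamma$-orbits, freeness of the $\Gamma$-action means that every $q\in P$ is uniquely of the form $g\cdot p_i$; hence $p_1,\dots,p_k$ is a free $\Lambda_{\Gamma,\omega}$-basis of $C_*(\mathfrak{c})$, and for $c=\sum_i\lambda_i p_i$ with $\lambda_i=\sum_g(b_i)_gg$ no two of the terms $(b_i)_g\,(g\cdot p_i)$ coincide, so that
\[\ell(c)=\max_{i\,:\,\lambda_i\neq 0}\bigl(\mathcal{A}(p_i)+\nu(\lambda_i)\bigr),\qquad \nu\Bigl(\sum_g b_gg\Bigr):=-\min\{\omega(g)\mid b_g\neq 0\}.\]
Thus $\ell$ is exactly the non-Archimedean filtration function attached to the basis $\{p_i\}$ with the real weights $\{\mathcal{A}(p_i)\}$, and $B$ is a submodule of this finitely generated free module over $\Lambda_{\Gamma,\omega}$. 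This is precisely the input the approximation theorem requires — and it is here that the hypothesis that $R$ be Noetherian enters — so applying that theorem to the pair $(z,B)$ delivers the desired $b_0$, and the proof of Theorem~\ref{tight} is complete modulo it.

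The substantive step, and the one I expect to be the real obstacle, is the approximation theorem itself: that, with respect to each metric in the relevant family, an element of a finitely generated free module over a Novikov ring admits a best approximation by elements of a prescribed submodule. The difficulty is that closed balls in these non-Archimedean metrics are far from compact, so one cannot argue by extracting a limit of a minimizing sequence $b_n\in B$ — such a sequence can wander off through the module while $\ell(z-b_n)$ still converges to the infimum. Overcoming this is where the Noetherian hypothesis does its work: it lets one keep the relevant submodules (and their ``leading-term'' pieces) under finite control and run an induction — using the completeness of the Novikov ring, in the spirit of the finite-dimensional theory over a complete valued field — that forces a minimizing sequence to stabilize and exhibits an honest best approximation. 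I would develop that argument in a self-contained section and then apply it exactly as above.
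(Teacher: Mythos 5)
Your reduction is exactly the paper's proof of Theorem \ref{tight}: identify $C_*(\mathfrak{c})\cong\Lambda_{\Gamma,\omega}^n$ so that $\ell=-\barnu_{\vec{t}}$ for $\vec{t}=(\mathcal{A}(p_1),\ldots,\mathcal{A}(p_n))$, and obtain $\alpha$ as a best approximation of a representative cycle by the submodule $\mathrm{Im}\,\partial$. The approximation statement you defer is precisely the paper's Theorem \ref{alg} (proved via Lemmas \ref{fixedpt}--\ref{tzero}, where Noetherianity of $K=R[\ker\omega]$ yields finite generation of the ``leading-term'' module $\tilde{V}\leq K^n$ and an iterative leading-order reduction replaces any compactness argument), and your sketch of how the Noetherian hypothesis enters matches that proof, so the proposal is correct and follows essentially the same route.
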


Recall that $R$ is the ring in which the $n(p,q)$ reside; in every filtered Floer-Novikov complex in the literature of which the author is aware $R$ is taken to be either a subfield of $\mathbb{C}$ or a quotient ring of $\mathbb{Z}$, so the assumption that $R$ is Noetherian is certainly a modest one.   The role of this assumption in the proof is that it guarantees that a certain submodule of a finitely generated module over the group ring $R[\ker\omega]$ will be finitely generated.  As we see in Remark \ref{non-noeth}, one can construct examples where our main theorems fail when the ring $K$ of Theorem \ref{alg} (which is set equal to $R[\ker \omega]$ in the proofs of Theorems \ref{nontriv} and \ref{tight}) is not Noetherian.

Note that the existence of the constant $M$ in Theorem \ref{nontriv} is reminiscent of Proposition A.4.9 of \cite{FOOO}.

Let us now recall two of the topological contexts in which graded filtered Floer-Novikov complexes arise.

\subsection{Novikov homology}  Let $M$ be a smooth closed manifold, and let $\theta$ be a closed 1-form on $M$ whose graph (as a submanifold of $T^*M$) is transverse to the zero section.  Let $\pi\co \tilde{M}\to M$ be any covering space of $M$ with the properties that the deck transformation group $\Gamma$ is abelian and $\pi^*\theta$ is exact; say $\pi^*\theta=d\tilde{f}$.  These data then give rise to a filtered Floer-Novikov complex as follows.  The finite set $S$ is the zero-set $\{p_1,\ldots,p_n\}$ of $\theta$, while the principal $\Gamma$-bundle $P\to S$ is just the restriction $\pi|_{\pi^{-1}(S)}$.  The action functional $\mathcal{A}\co P\to\mathbb{R}$ is given by $\mathcal{A}=\tilde{f}|_{P}$.  The period map $\omega$ is given by, for $g\in \Gamma=\pi_1(M)/\pi_1(\tilde{M})$, setting $\omega(g)=-\int_{\gamma}\theta$ for $\gamma$ an arbitrary loop representing $g$.   If  $\tilde{p}_i,\tilde{p}_j\in P$, the numbers $n(\tilde{p}_i,\tilde{p}_j)$ are zero unless $ind_{\tilde{f}}(\tilde{p}_i)=ind_{\tilde{f}}(\tilde{p}_j)+1$ (where $ind_{\tilde{f}}$ denotes the Morse index), in which case $n(\tilde{p}_i,\tilde{p}_j)$ is  obtained by counting integral curves $\gamma\co \mathbb{R}\to \tilde{M}$ of the negative gradient vector field of $\tilde{f}$ with respect to the pullback of a generic metric on $M$, where we require $\gamma(t)\to\tilde{p}_i$ as $t\to -\infty$ and $\gamma(t)\to \tilde{p}_j$ as $t\to\infty$.

At least when $R$ is a field, the resulting Novikov chain complex $C_*(\mathfrak{c})$ is chain homotopy equivalent to to $C_*(\tilde{M})\otimes_{R[\Gamma]}\Lambda_{\Gamma,\omega}$ (see \cite{F}).  Write $i_*\co H_*(\tilde{M})\to H_*(\mathfrak{c})$ for the map induced by coefficient extension by $\Lambda_{\Gamma,\omega}$.   
  For a class $i_*a\in H_*(\mathfrak{c})$ to satisfy $\rho(i_*a)=-\infty$ is closely analogous to the concept of $a\in C_*(\tilde{M})$ being \emph{movable to infinity} in the sense of \cite{FS}, since that $\rho(i_*a)=-\infty$ means that $a$ can be obtained from critical points of the action functional on $\tilde{M}$ (this functional being a primitive for $\pi^*\theta$) having arbitrarily large negative action.  In \cite{FS} it is shown that if $[\theta]\in H^*(M;\mathbb{R})$ has rank equal to the rank of $\Gamma$ then a class $a\in H_*(\tilde{M})$  can be moved to infinity if and only if $i_*a=0\in C_*(\tilde{M})\otimes_{R[\Gamma]}\Lambda_{\Gamma,\omega}$, consistently with Theorem \ref{nontriv}.

We note that since Theorems \ref{nontriv} and \ref{tight} do not require any injectivity hypothesis on the map $\omega$, we can take for $\tilde{M}$ above any abelian cover $\pi\co \tilde{M}\to M$ such that $\pi^*\theta$ is exact; thus our theorems are valid for Novikov homology with arbitrary abelian local coefficient systems.

\subsection{Hamiltonian Floer homology} Let $(M,\omega)$ be a symplectic manifold and let $H\co S^1\times M\to\mathbb{R}$ be a smooth function (we identify $S^1=\mathbb{R}/\mathbb{Z}$).  Let $X_H$ be the time-dependent vector field defined by $d(H(t,\cdot))=\iota_{X_H}\omega$, and let $\phi_H\co M\to M$ be the time-1 flow of $X_H$.  Let $\mathcal{L}_0M$ denote the space of contractible  loops $\gamma\co S^1\to M$.  Assuming that $H$ is nondegenerate in the sense that the  graph of $\phi_H$ is transverse to the diagonal of $M\times M$, the set \[ S=\{\gamma\in \mathcal{L}_0M| \dot{\gamma}(t)=X_H(t,\gamma(t))\}\] is finite.  Define \[ \widetilde{\mathcal{L}_0M}=\frac{\{(\gamma,w)\in \mathcal{L}_0M\times Map(D^2,M)| w|_{\partial D^2}=\gamma\}}{(\gamma,w)\sim (\gamma',w') \mbox{ if }\gamma=\gamma', \int_{D^2}w^*\omega=\int_{D^2}w'^*\omega,\mbox{ and }\langle c_1(M),[w'\#\bar{w}]\rangle=0}.\]   
The projection $\widetilde{\mathcal{L}_0M}\to\mathcal{L}_0M$ then restricts over the finite set $S$ to a principal $\Gamma$-bundle $P\to S$, where \[ \Gamma=\frac{\pi_2(M)}{\ker(\langle c_1,\cdot\rangle)\cap \ker(\langle [\omega],\cdot\rangle)}.\]  (Here, \emph{e.g.}, $\langle c_1,\cdot\rangle$ denotes the map defined on $\pi_2(M)$ by composing the Hurewicz map with evaluation of $c_1(M)\in H^2(M;\mathbb{Z})$).  The period map $\omega\co\Gamma\to\mathbb{R}$ is given by $\langle [\omega],\cdot\rangle$.  The action functional $\mathcal{A}\co P\to \mathbb{R}$ is given by \[ \mathcal{A}([\gamma,w])=-\int_{D^2}w^*\omega-\int_{0}^{1}H(t,\gamma(t))dt.\]    Finally, the numbers $n([\gamma,w],[\gamma',w'])$ are obtained by enumerating rigid solutions $u\co \mathbb{R}\times S^1\to M$ to the perturbed Cauchy-Riemann equation \[ \frac{\partial u}{\partial s}+J(t,u(s,t))\left(\frac{\partial u}{\partial t}-X_H(t,u(s,t))\right)=0\] which satisfy $u(s,\cdot)\to\gamma$ as $s\to\-\infty$, $u(s,\cdot)\to\gamma'$ as $s\to +\infty$, and $w\#u=w'$.  Here $J(t,\cdot)$ is a generic family of $\omega$-compatible almost complex structures on $TM$.  See \cite{Sal} for a survey of the details of the construction for a large family of symplectic manifolds, and \cite{FO},\cite{LT} for the general case.  (Conventionally, $R$ is usually taken to be either $\mathbb{Z}_2$, $\mathbb{Z}$, or $\mathbb{Q}$;  when the virtual cycle methods of \cite{FO} and \cite{LT} are needed, it is necessary to take  $R$ to be a field of characteristic zero.) 
A crucial property of the resulting Floer homology is that it is canonically isomorphic to the quantum homology of $(M,\omega)$.  

As mentioned earlier, Hamiltonian Floer homology is the Floer theory for which the spectral numbers have been most heavily developed, beginning with Schwarz's work \cite{S} and continuing with papers by Oh such as \cite{Oh1}, \cite{Oh2} (in turn, Schwarz's work was motivated in part by earlier work of Viterbo and Oh on Lagrangian submanifolds).  One of the earlier properties to be established was a nontriviality property analagous to Theorem \ref{nontriv} above, which follows as a result of the nature of the isomorphism with quantum homology.  The analogue of Theorem \ref{tight}, on the other hand, has been more of a challenge.
Theorem \ref{tight} in particular implies that, for any nonzero $a\in H_*(\mathfrak{c})$, we have
 \[ \rho(a)\in Im(\mathcal{A}\co P\to \mathbb{R}) \]  

The set $Im(\mathcal{A}\co P\to\mathbb{R})$ is known in the literature as the action spectrum $Spec(H)$ of $H$, and the fact that $\rho$ takes its values there is known as the spectrality axiom for $\rho$.  We accordingly emphasize that we have proven:

\begin{cor} Let $H$ be a nondegenerate Hamiltonian on any closed symplectic manifold.  Then the spectral number $\rho$ of the Floer homology of $H$ satisfies the spectrality axiom.\end{cor}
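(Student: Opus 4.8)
The plan is to obtain the corollary as a direct consequence of Theorem~\ref{tight}; the only work is to verify that Hamiltonian Floer homology really is an instance of the abstract setup. First I would check that the data attached to a nondegenerate Hamiltonian $H$ on a closed symplectic manifold $(M,\omega)$ in the preceding subsection --- the principal $\Gamma$-bundle $P\to S$, the action functional $\mathcal{A}\co P\to\mathbb{R}$ together with the period homomorphism $\omega\co\Gamma\to\mathbb{R}$, and the structure constants $n([\gamma,w],[\gamma',w'])$ --- satisfy conditions (1)--(3) of the definition of a filtered Floer-Novikov complex. Here $\Gamma$ is finitely generated because it is a quotient of $\pi_2(M)$; the identity $\mathcal{A}(g\cdot[\gamma,w])=\mathcal{A}([\gamma,w])-\omega(g)$ is immediate from the formula for $\mathcal{A}$ and the definition $\omega=\langle[\omega],\cdot\rangle$; and the remaining assertions --- finiteness of $S$, the fact that $\partial$ takes values in $C_*(\mathfrak{c})$, and $\partial\circ\partial=0$ --- are precisely the foundational compactness-and-gluing results of Hamiltonian Floer theory, available in the cited references. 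Since in every such construction $R$ is one of $\mathbb{Z}_2$, $\mathbb{Z}$, $\mathbb{Q}$, or a field of characteristic zero, $R$ is Noetherian, so Theorems~\ref{nontriv} and \ref{tight} apply to $\mathfrak{c}$.

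Next I would note that the filtered subcomplex $C^{\lambda}_*(\mathfrak{c})$ is, by construction, generated by the capped orbits $[\gamma,w]$ with $\mathcal{A}([\gamma,w])\leq\lambda$, which is exactly the action filtration used in the literature; hence the abstractly defined $\rho$ agrees with the spectral invariant of the Floer homology of $H$. Granting this, the corollary follows at once: for $0\neq a\in H_*(\mathfrak{c})$, Theorem~\ref{tight} supplies $\alpha=\sum_{p\in P}\alpha_pp$ with $[\alpha]=a$ and $\ell(\alpha)=\rho(a)$. As $\alpha\neq 0$, the set $\{\mathcal{A}(p)\mid\alpha_p\neq 0\}$ is nonempty, bounded above and discrete, so it attains its supremum; thus $\rho(a)=\ell(\alpha)=\mathcal{A}(p_0)$ for some $p_0\in P$ with $\alpha_{p_0}\neq 0$, whence $\rho(a)\in Im(\mathcal{A}\co P\to\mathbb{R})=Spec(H)$. (For $a=0$ one has $\rho(0)=-\infty$ under the convention $\ell(0)=-\infty$, and the spectrality axiom is by convention only asserted for nonzero classes.)

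The hard part is already behind us, encapsulated in Theorem~\ref{tight}; there is no genuine obstacle in the deduction above beyond careful bookkeeping. The one point deserving attention is the comparison of conventions: different treatments of Hamiltonian Floer homology make slightly different choices for $\Gamma$ and the associated Novikov ring (for instance whether or not one quotients $\pi_2(M)$ by $\ker\langle c_1,\cdot\rangle$), but the action filtration, and therefore the spectral number, is the same in all of them, so the conclusion $\rho(a)\in Spec(H)$ is unaffected by these choices.
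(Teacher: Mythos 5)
Your proposal is correct and follows the same route as the paper: the corollary is deduced directly from Theorem~\ref{tight}, using that $\ell(\alpha)$ is by definition an attained supremum of values $\mathcal{A}(p)$ and hence lies in $Im(\mathcal{A})=Spec(H)$. The additional verification that Hamiltonian Floer homology fits the abstract framework is exactly what the paper delegates to the cited references, so nothing is missing.
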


The main results are consequences of a result (Theorem \ref{alg}) about homomorphisms of free finite-rank modules over Novikov rings such as $\Lambda_{\Gamma,\omega}$.  The next section is devoted to stating and proving that result, while in the final section we will deduce Theorems \ref{nontriv} and \ref{tight} from Theorem \ref{alg}.

\subsection*{Acknowledgement}
I am grateful to Y.-G. Oh for his comments on earlier drafts of this paper.

\section{Approximation over Novikov rings}

Throughout this section $K$ will denote a  ring (as explained later, in practice $K$ will be a quotient of a polynomial ring over the ring $R$ that appeared in the last section, which is why we use a different notation for it), and $G\leq \mathbb{R}$ will denote an additive subgroup of $\mathbb{R}$.  Except in this section's closing Remark \ref{non-noeth}, we will always \textbf{assume that $K$ is Noetherian}. The Novikov ring of $G$ over $K$ is then, by definition, \[ \Lambda_K(G)=\{\sum_{g\in G}c_gT^g|c_g\in K,(\forall C>0)(\#\{g|c_g\neq 0,g<C\}<\infty)\}.\] For $a=\sum c_gT^g\in \Lambda_K(G)$ define $\nu(a)=\min\{g|c_g\neq 0\}$ (so $\nu(a)=\infty$ if and only if $a=0$).  For any $n$, this induces a function \begin{align*} \barnu\co \Lambda_K(G)^n&\to\mathbb{R}\cup\{\infty\} \\(a_1,\ldots,a_n)&\mapsto \min_{1\leq i\leq n}\nu(a_i)\end{align*} which attains the value $\infty$ only at the zero vector.  Note that $\barnu$ satisfies the non-Archimedean triangle inequality \[ \barnu(v+w)\geq \min\{\barnu(v),\barnu(w)\},\] which is in fact an equality whenever $\barnu(w)\neq\barnu(v)$.  

Setting $d(v,w)=e^{-\barnu(v-w)}$ then makes $\Lambda_K(G)^n$ into a non-Archimedean metric space.  The goal of this section is to show that if $V\leq \Lambda_K(G)^n$ is a submodule over $\Lambda_K(G)$, then  any $w\in \Lambda_K(G)^n$ has a best approximation in $V$ with respect to the metric $d$, and also with respect to the metrics obtained by replacing $\barnu$ with certain other functions $\barnu_{\vec{t}}$ in the formula for $d$.  Note that if $G\leq \mathbb{R}$ is a dense subgroup and $K$ is a field (which implies that $\Lambda_K(G)$ is a non-Archimedean field) the example given in Section 3 of \cite{IH} can be adapted to give a non-Archimedean norm on $\Lambda_K(G)^2$ with respect to which $(0,1)$ does not have a best approximation in $\Lambda_K(G)\times\{0\}$, so the property which we are to prove depends in a meaningful way on the functions $\barnu_{\vec{t}}$ and is not just a consequence of $\Lambda_K(G)^n$ having finite rank.

Define \[ \Lambda_K(G)_{\geq 0}=\{a\in \Lambda_K(G)|\nu(a)\geq 0\},\quad \Lambda_K(G)_{+}=\{a\in \Lambda_K(G)|\nu(a)> 0\},\] and similarly, for any $\Lambda_K(G)$-submodule $V$ of $\Lambda_K(G)^n$, \[
V_{\geq 0}=\{v\in V|\barnu(v)\geq 0\},\quad V_+=\{v\in V|\barnu(v)>0\}.\]

Note that $V_{\geq 0}$ is a $\Lambda_K(G)_{\geq 0}$-module.

Our argument will twice make use of the following lemma.

\begin{lemma}\label{fixedpt}  Let $\{u_1,\ldots,u_k\}\in \Lambda_K(G)^n$ with $\barnu(u_i)=0$, and let \[ U=span_{\Lambda_K(G)}\{u_1,\ldots,u_k\}.\] Let $V\leq \Lambda_K(G)^n$ be any $\Lambda_K(G)$-submodule such that $U\leq V$.  Suppose that $\phi\co V\to V$ is any function with the following properties: \begin{itemize} \item[(i)] For all $v\in V$, either $\phi(v)=v$ or $\barnu(\phi(v))>\barnu(v)$ (so in particular $\phi(0)=0$).
\item[(ii)] If $\phi(v)\neq v$, then $v-\phi(v)\in span_K\{T^{\barnu(v)}u_1,\ldots,T^{\barnu(v)}u_k\}$.
\end{itemize}
Then for every $v\in V$ there is $u\in U$ such that \[ \phi(v-u)=v-u\] and either $u=0$ or else \[ \barnu(u)=\barnu(v),\, T^{-\barnu(u)}u\in span_{\Lambda_K(G)_{\geq 0}}\{u_1,\ldots,u_k\}.  \] 
\end{lemma}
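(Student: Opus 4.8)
I would prove the lemma by running a fixed-point iteration on $\phi$, accumulating the correction term in $U$. Set $w_0=v$, $u^{(0)}=0$. Given $w_l\in V$ with $u^{(l)}:=v-w_l\in U$, stop if $\phi(w_l)=w_l$; otherwise use hypothesis (ii) to write $w_l-\phi(w_l)=\sum_{i=1}^{k}c_i^{(l)}T^{g_l}u_i$ with $c_i^{(l)}\in K$ and $g_l:=\barnu(w_l)\in G$, and put $w_{l+1}=\phi(w_l)$, $u^{(l+1)}=u^{(l)}+\sum_i c_i^{(l)}T^{g_l}u_i$. Since $T^{g_l}\in\Lambda_K(G)$ we still have $u^{(l+1)}\in U$, and hypothesis (i) forces the $g_l=\barnu(w_l)$ to increase strictly while the iteration runs, so $g_j\ge g_0=\barnu(v)$ throughout. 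Two bookkeeping facts will be used: first, writing $u^{(l)}=\sum_j\sum_i c_i^{(j)}T^{g_j}u_i$ with every $g_j\ge\barnu(v)$ gives $T^{-\barnu(v)}u^{(l)}\in span_{\Lambda_K(G)_{\ge 0}}\{u_1,\ldots,u_k\}$; second, when $u^{(l)}\ne 0$ (so $l\ge 1$ and $g_l>g_0$) the non-Archimedean equality applied to $v$ and $w_l$ gives $\barnu(u^{(l)})=\barnu(v-w_l)=\barnu(v)$.

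There are two cases. If the iteration terminates, say $\phi(w_l)=w_l$, take $u:=u^{(l)}\in U$: then $\phi(v-u)=\phi(w_l)=w_l=v-u$, and either $u=0$ or the bookkeeping above gives $\barnu(u)=\barnu(v)$ and $T^{-\barnu(u)}u\in span_{\Lambda_K(G)_{\ge 0}}\{u_1,\ldots,u_k\}$, as required. If the iteration never terminates, I claim $g_l\to+\infty$. Granting this, $w_l\to 0$, so $u^{(l)}=v-w_l\to v$; at the same time $u^{(l+1)}-u^{(l)}=w_l-\phi(w_l)$ has $\barnu$ equal to $g_l\to+\infty$, so $(u^{(l)})$ is Cauchy and converges to $u=\sum_i\tilde c_i u_i$ with $\tilde c_i=\sum_j c_i^{(j)}T^{g_j}\in\Lambda_K(G)$ (a legitimate Novikov element precisely because $g_j\to+\infty$); hence $u\in U$ and $T^{-\barnu(v)}u\in span_{\Lambda_K(G)_{\ge 0}}\{u_i\}$. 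By uniqueness of limits $v=u$, so $v\in U$, and $u=v$ works since $\phi(v-u)=\phi(0)=0=v-u$.

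The crux, and the one place the Novikov finiteness condition is genuinely needed, is the claim that a non-terminating iteration has $g_l\to+\infty$. Suppose instead $L:=\sup_l g_l<\infty$, so $g_l\nearrow L$ with $g_l<L$ for all $l$. Let $S=\bigcup_{i=1}^{k}\mathrm{supp}(u_i)$; since $\barnu(u_i)=0$, $S$ is a locally finite subset of $[0,\infty)$ containing $0$, so there is $s_1>0$ (possibly $+\infty$) with $S\cap(0,s_1)=\emptyset$. Choose $l_0$ with $L-g_{l_0}<s_1$. For $l\ge l_0$, the part of $\mathrm{supp}(T^{g_l}u_i)=g_l+\mathrm{supp}(u_i)$ lying in $(-\infty,L)$ is contained in $\{g_l\}$, so passing from $w_l$ to $w_{l+1}=w_l-\sum_i c_i^{(l)}T^{g_l}u_i$ alters $w_l$ inside $(-\infty,L)$ only at the single level $g_l$, where the new coefficient vanishes because $\barnu(w_{l+1})>g_l$. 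Thus $\mathrm{supp}(w_{l+1})\cap(-\infty,L)=\big(\mathrm{supp}(w_l)\cap(-\infty,L)\big)\setminus\{g_l\}$ for all $l\ge l_0$. But $\mathrm{supp}(w_{l_0})\cap(-\infty,L)$ is finite by the defining condition of $\Lambda_K(G)^n$, so after finitely many further steps this set is empty, forcing $\barnu(w_l)\ge L$ and contradicting $g_l<L$; hence $L=+\infty$. The remaining points — that each $u^{(l)}$ lies in $U$, the value of $\barnu(u^{(l)})$, and the $span_{\Lambda_K(G)_{\ge 0}}$ membership — are routine consequences of the non-Archimedean (in)equality together with $g_j\ge\barnu(v)$.
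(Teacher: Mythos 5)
Your proof is correct, and its skeleton --- iterate $\phi$, accumulate the corrections $w_l-\phi(w_l)$ into an element of $U$, and split into the cases where the iteration terminates or runs forever --- is the same as the paper's; your treatment of the terminating case and of summability and limit identification in the non-terminating case also matches. Where you genuinely differ is at the one nontrivial point, namely that a non-terminating iteration must have $\barnu(w_l)\to\infty$. The paper proves this a priori: writing $N(S)$ for the set of exponents occurring in a finite set $S$ of vectors, it shows by induction that every level $\barnu(w_l)$ lies in the fixed set $N(\{v\})+\bigcup_{r\geq 1}r\bigl(N(\{u_1,\ldots,u_k\})\bigr)$, which is discrete and bounded below because the exponents occurring in the $u_i$ are nonnegative and locally finite; a strictly increasing sequence in such a set cannot have a finite supremum. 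You instead argue by contradiction, localizing at the putative finite supremum $L$: once $L-g_{l_0}$ is smaller than the least positive exponent occurring in the $u_i$, each further step deletes exactly the exponent $g_l$ from $\mathrm{supp}(w_l)\cap(-\infty,L)$ without introducing new ones below $L$, and the Novikov finiteness condition makes that set finite, so it is exhausted in finitely many steps. Both arguments rest on the same two inputs ($\barnu(u_i)=0$ and local finiteness of supports); the paper's version has the mild advantage of exhibiting one discrete set containing all the levels at once, while yours avoids the semigroup construction entirely at the cost of a separate choice of $l_0$ depending on $L$. Either is a complete proof.
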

\begin{remark} Note that $\phi$ need  not be an additive group homomorphism (much less a module homomorphism).\end{remark}
\begin{proof} Now any $w\in \Lambda_K(G)^n$ can be expressed in the form $w=\sum_g w_gT^g$ where $w_g\in K^n$,  and we have $\barnu(w)=\min\{g|w_g\neq 0\}$.  Given any finite subset $S\subset \Lambda_K(G)^n$, define \[ N(S)=\{g|(\exists w\in S)(w_g\neq 0)\}.\]  Note that the finiteness of $S$ and the definition of the Novikov ring show that $N(S)$ is always discrete and bounded below.  Where $\phi$ is as in the statement of the lemma, for any $v\in V$ we see that (since $\phi(v)$ differs from $v$ by an element of the span \textbf{over \textit{K}} of the $T^{\nu(v)}u_i$, and since $0\in N(\{u_1,\ldots,u_k\})$) we have \begin{equation}\label{Ninc} N(\{\phi(v),v-\phi(v)\})\subset N(\{v\})+N(\{u_1,\ldots,u_k\}),\end{equation} where we use the usual notation $A+B=\{a+b|a\in A,b\in B\}$ for sets $A,B\subset\mathbb{R}$.

Define sequences $\{v^{(j)}\}_{j=0}^{\infty}$, $\{w^{(j)}\}_{j=0}^{\infty}$ of elements of, respectively, $V$ and $U$, by \[ v^{(0)}=v, \,\, w^{(0)}=0,\,\, v^{(j+1)}=\phi(v^{(j)}),\,\, w^{(j+1)}=v^{(j)}-\phi(v^{(j)}).\]

By (\ref{Ninc}) and induction on $j$, we see that for any $j$ \[ N(\{v^{(j)},w^{(j)}\})\subset N(\{v\})+j(N(\{u_1,\ldots,u_k\}))\subset N(\{v\})+\cup_{r=1}^{\infty}r(N(\{u_1,\ldots,u_k\})),\] where for $A\subset\mathbb{R}$ and $j\in\mathbb{N}$ we define $j(A)=\{\sum_{i=1}^{j}a_i|a_i\in A\}$.  Now since $\barnu(u_i)= 0$, the set $N(\{u_1,\ldots,u_k\})$ is a discrete set of nonnegative numbers; hence the set \[\cup_{r=1}^{\infty}r(N(\{u_1,\ldots,u_k\})),\] which consists of nonnegative-integer linear combinations of elements of 
$N(\{u_1,\ldots,u_k\})$, is also a discrete set of nonnegative numbers.  So since $N(\{v\})$ is discrete and bounded below, it follows that the set $Z=N(\{v\})+\cup_{r=1}^{\infty}r(N(\{u_1,\ldots,u_k\}))$ is discrete as well.

Now $\barnu(v^{(j)})$ is a monotone increasing sequence in this discrete set $Z$, and if for some $N$ we have $\barnu(v^{(N+1)})=\barnu(v^{(N)})$ then $v^{(j)}=v^{(N)}$ for all $j\geq N$, so either there is some minimal $N$ such $\barnu(v^{(j)})=\barnu(v^{(N)})$ for all $j\geq N$, or else $\barnu(v^{(j)})\to\infty$.

In the first case, by the defining properties of $\phi$ we see that $\phi(v^{(N)})=v^{(N)}$, and \[ v-v^{(N)}=\sum_{j=1}^{N}(v^{(j-1)}-v^{(j)})=\sum_{j=1}^{N}w^{(j)}.\]  Each $w^{(j)}$ belongs to $T^{\barnu(v^{(j-1)})}span_K\{u_1,\ldots,u_k\}$, so since the $\barnu(v^{(j)})$ form a monotone increasing sequence beginning at $\barnu(v)$ it follows that, where $u=\sum_{j=1}^{N}w^{(j)}$, we have  \[ T^{-\barnu(v)}u\in span_{\Lambda_K(G)_{\geq 0}}\{u_1,\ldots,u_k\}.\] Furthermore, unless $v^{(N)}=v$ (\emph{i.e.}, unless $u=0$) one has $\barnu(v^{(N)})=\barnu(v-u)>\barnu(v)$, which forces $\barnu(u)=\barnu(v)$.  Thus $u$ is as required.

There remains the case that $\barnu(v^{(j)})\to\infty$.    
  Now $w^{(j)}=v^{(j-1)}-v^{(j)}$
can, by (ii), be written \[ w^{(j)}=\sum_i a_{ij}u_iT^{\barnu(v^{(j-1)})}\quad (a_{ij}\in K),\] so, using that the $\barnu(v^{(j)})$ strictly increase from $\barnu(v)$ and diverge to $\infty$, \[ u=\sum_{j=1}^{\infty}w^{(j)}=\sum_{i=1}^{k}\left(\sum_{j=1}^{\infty}a_{ij}T^{\barnu(v^{(j-1)})}\right)u_i\]
validly defines an element of $T^{\barnu(v)}span_{\Lambda_K(G)_{\geq 0}}\{u_1,\ldots,u_k\}$.  For any $N$, one has (using the fact that the $\barnu(w^{(j)})=\barnu(v^{(j-1)})$ are increasing in $j$) \[ \barnu(v-u)\geq \min\{\barnu(v-\sum_{j=1}^{N}w^{(j)}),\barnu(\sum_{j=N+1}^{\infty}w^{(j)})\}=\min\{\barnu(v^{(N)}),\barnu(w^{(N+1)})\}=\barnu(v^{(N)});\] that this holds for all $N$ forces $\barnu(v-u)=\infty$, \emph{i.e.}, $v=u$, so the required properties of $u$ follow immediately.  
\end{proof}

For a submodule $V\leq \Lambda_K(G)^n$, define \[ \tilde{V}=V_{\geq 0}/V_+.\]  Note that one has $K\cong \Lambda_K(G)_{\geq 0}/\Lambda_K(G)_+$, with the quotient projection corresponding by this isomorphism to $\sum_g a_gT^g\mapsto a_0$.  $\tilde{V}$ is then a $K$-module, and is a submodule of $\widetilde{\Lambda_K(G)^n}\cong K^n$.  In particular, since $K$ is Noetherian (and submodules of finitely generated modules over Noetherian rings are finitely generated), $\tilde{V}$ is finitely generated over $K$.

For $v\in V_{\geq 0}$, let $\tilde{v}\in\tilde{V}$ denote the image of $v$ under the quotient map $V_{\geq 0}\to \tilde{V}$.  Our first application of Lemma \ref{fixedpt} is the following, which in retrospect is analogous to Lemma A.4.11 in \cite{FOOO}.

\begin{lemma} \label{span}If $V\leq \Lambda_K(G)^n$ and $u_1,\ldots,u_k\in V_{\geq 0}$ are such that $\tilde{V}=span_K\{\tilde{u}_1,\ldots,\tilde{u}_k\}$, then \[ V_{\geq 0}=span_{\Lambda_K(G)_{\geq 0}}\{u_1,\ldots,u_k\}.\]
\end{lemma}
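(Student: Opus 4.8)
The plan is to apply Lemma \ref{fixedpt} in a way that lets us reduce an arbitrary element of $V_{\ge 0}$ to $0$ by successively subtracting $\Lambda_K(G)_{\ge 0}$-combinations of the $u_i$. The containment $span_{\Lambda_K(G)_{\ge 0}}\{u_1,\dots,u_k\}\subseteq V_{\ge 0}$ is immediate since each $u_i\in V_{\ge 0}$ and $V_{\ge 0}$ is a $\Lambda_K(G)_{\ge 0}$-module, so the real content is the reverse containment. First I would observe that, since $\tilde V=span_K\{\tilde u_1,\dots,\tilde u_k\}$, we may as well assume $\barnu(u_i)=0$ for every $i$ (an element of $V_{\ge 0}$ whose class in $\tilde V$ is nonzero necessarily has $\barnu=0$; and any $u_i$ with $\barnu(u_i)>0$ has $\tilde u_i=0$ and so can be discarded from the spanning set without affecting the hypothesis — after discarding, if all $u_i$ are gone then $\tilde V=0$, i.e. $V_{\ge0}=V_+$, and one handles that degenerate case separately, or simply notes the argument below still runs vacuously). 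So set $U=span_{\Lambda_K(G)}\{u_1,\dots,u_k\}\le V$, putting us in the setup of Lemma \ref{fixedpt}.

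Next I would define the reduction map $\phi\co V_{\ge 0}\to V_{\ge 0}$ as follows. Given $v\in V_{\ge 0}$ with $v\ne 0$, write $\barnu(v)=g\ge 0$ and let $v_g\in K^n$ be the leading coefficient, so that $T^{-g}v\in V_{\ge 0}$ has class $\widetilde{T^{-g}v}=v_g\in\tilde V$ — wait, one must be careful: $T^{-g}v$ need not lie in $V$ if $-g\notin G$ acting on $V$ is not defined. The correct move is instead to note that the reduction should be done only when $g=0$, and to iterate using the module structure. Concretely: if $\barnu(v)=0$, then $\tilde v=v_0\in\tilde V=span_K\{\tilde u_1,\dots,\tilde u_k\}$, so there are $a_i\in K$ with $v_0=\sum a_i (u_i)_0$, and I set $\phi(v)=v-\sum a_i u_i$; then $\barnu(\phi(v))>0=\barnu(v)$ and $v-\phi(v)=\sum a_i u_i\in span_K\{T^{\barnu(v)}u_1,\dots,T^{\barnu(v)}u_k\}$ since $\barnu(v)=0$. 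If $\barnu(v)>0$, I set $\phi(v)=v$. This $\phi$ satisfies hypotheses (i) and (ii) of Lemma \ref{fixedpt} with $U$ as above, so the lemma produces, for each $v\in V_{\ge 0}$, an element $u\in U$ with $\phi(v-u)=v-u$ and (if $u\ne0$) $\barnu(u)=\barnu(v)\ge 0$ together with $T^{-\barnu(u)}u\in span_{\Lambda_K(G)_{\ge0}}\{u_1,\dots,u_k\}$; in particular $u\in span_{\Lambda_K(G)_{\ge0}}\{u_1,\dots,u_k\}$, and if $u=0$ that containment holds trivially.

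Finally, $\phi(v-u)=v-u$ means by the definition of $\phi$ that $\barnu(v-u)>0$, i.e. $v-u\in V_+$. The remaining step — and this is the one genuine obstacle — is to upgrade ``$v-u\in V_+$'' to ``$v-u\in span_{\Lambda_K(G)_{\ge0}}\{u_1,\dots,u_k\}$''. The point is that $V_+=T^{\epsilon}$-scaled pieces of $V_{\ge0}$ for small $\epsilon>0$, so one wants an inductive/Noetherian argument: apply the same reduction to $v-u$ after rescaling. More precisely, I expect to argue that any element $z\in V_+$ can be written $z=\sum_{\ell} T^{g_\ell} z_\ell$ with $z_\ell\in V_{\ge0}$ and $g_\ell\to\infty$, $g_\ell>0$ a discrete increasing sequence (using that $\barnu$ takes values in a discrete set determined by $V$ and the finitely many generators — here Noetherianness of $K$ is what makes $\tilde V$, hence the relevant value set, finitely generated); applying the combination of $u_i$'s furnished above to each $z_\ell$ and summing the resulting infinite $\Lambda_K(G)_{\ge0}$-combination (which converges in the Novikov topology precisely because $g_\ell\to\infty$) expresses $z$ as an element of $span_{\Lambda_K(G)_{\ge0}}\{u_1,\dots,u_k\}$. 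Combining, $v=u+(v-u)\in span_{\Lambda_K(G)_{\ge0}}\{u_1,\dots,u_k\}$, which is the desired conclusion. The hard part is organizing this last telescoping/convergence argument cleanly so that the infinite sum manifestly lies in $span_{\Lambda_K(G)_{\ge0}}\{u_1,\dots,u_k\}$ rather than merely in its completion; I expect the discreteness and lower-boundedness properties of the sets $N(S)$ exploited in the proof of Lemma \ref{fixedpt} to be exactly what makes this work.
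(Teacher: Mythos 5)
There is a genuine gap, and it stems from a false alarm early in your argument. You reject the natural definition of the reduction map on the grounds that ``$T^{-g}v$ need not lie in $V$,'' but in fact for any nonzero $v\in \Lambda_K(G)^n$ one has $\barnu(v)\in G$ (it is the minimum of the exponents actually occurring in $v$, all of which lie in $G$), so $T^{-\barnu(v)}$ is a unit of $\Lambda_K(G)$ and $T^{-\barnu(v)}v\in V$ because $V$ is a $\Lambda_K(G)$-submodule. The paper's proof uses exactly this: for every nonzero $v\in V$ one rescales, writes $\widetilde{T^{-\barnu(v)}v}=\sum_i x_i(v)\tilde u_i$ with $x_i(v)\in K$, and sets $\phi(v)=v-T^{\barnu(v)}\sum_i x_i(v)u_i$. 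With this definition $\phi$ has \emph{no nonzero fixed points}, so Lemma \ref{fixedpt} forces the element $u$ it produces to equal $v$ itself, and the conclusion $T^{-\barnu(v)}u\in span_{\Lambda_K(G)_{\geq 0}}\{u_1,\dots,u_k\}$ is already the desired statement (for $v\in V_{\geq 0}$ one has $\barnu(v)\geq 0$, hence $v=T^{\barnu(v)}\bigl(T^{-\barnu(v)}v\bigr)$ lies in the $\Lambda_K(G)_{\geq 0}$-span). No residual term in $V_+$ ever appears.

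By instead declaring $\phi$ to be the identity on all of $V_+$, you make every element of $V_+$ a fixed point, so Lemma \ref{fixedpt} only delivers $v-u\in V_+$, and you are left with what you correctly identify as ``the one genuine obstacle.'' Your proposed telescoping repair is not carried out and, as sketched, does not close: writing $z\in V_+$ as $T^{g_1}(T^{-g_1}z)$ and reducing $T^{-g_1}z$ just produces another remainder in $V_+$, so you would have to iterate infinitely and prove that the accumulated coefficients converge in $\Lambda_K(G)_{\geq 0}$ and that the remainders tend to $\infty$ in $\barnu$ --- which is precisely the content of the proof of Lemma \ref{fixedpt}. In other words, the missing step amounts to re-proving Lemma \ref{fixedpt} by hand rather than invoking it with the right $\phi$. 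The fix is simply to define $\phi$ on all of $V$ via the rescaling you discarded.
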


\begin{proof} Since if for some $i$ we had $\barnu(u_i)\neq 0$ then $\tilde{u}_i$ would vanish in $\tilde{V}$, by removing $u_i$ if necessary we may as well assume that each $\barnu(u_i)=0$.  We define a function $\phi\co V\to V$ as follows.  First set $\phi(0)=0$.  If $v\in V$ is nonzero, we have $\barnu(T^{-\barnu(v)}v)=0$.  Since the $\tilde{u}_i$ span $\tilde{V}$ over $K$, we can pick $x_1(v),\ldots,x_k(v)\in K$ such that \[ \widetilde{T^{-\barnu(v)}v}=\sum_{i=1}^{k}x_i(v)\tilde{u}_i\in \tilde{V}.\]  Thus \[ \barnu(T^{-\barnu(v)}v-\sum_{i=1}^{k}x_i(v)u_i)>0.\]  So if we set $\phi(v)=v-T^{\barnu(v)}\sum_{i=1}^{k}x_i(v)u_i$ for $v\neq 0$, $\phi$ now satisfies the hypotheses of Lemma \ref{fixedpt} together with the additional property that its only fixed point is $0$.  This latter property then forces the $u$ that is found by the lemma for any given $v$ to be equal to $v$.  Thus for any nonzero $v\in V$, we have \[ T^{-\barnu(v)}v\in span_{\Lambda_K(G)_{\geq 0}}\{u_1,\ldots,u_k\};\] in particular, if $v\in V_{\geq 0}$ then \[ v\in span_{\Lambda_K(G)_{\geq 0}}\{u_1,\ldots,u_k\}.\]
\end{proof}

\begin{lemma}\label{tzero} Let $U\leq \Lambda_K(G)^n$ be any submodule, and let $w\in \Lambda_K(G)^n$.  Then there is $u\in U$ such that \[ \barnu(w-u)=\sup_{v\in U}\barnu(w-v)\mbox{ and either }u=0 \mbox{ or } \barnu(u)=\barnu(w).\]
\end{lemma}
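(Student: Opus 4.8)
The plan is to exhibit the best approximation $u$ as a fixed point of an auxiliary self-map to which Lemma~\ref{fixedpt} applies, after first replacing $U$ by a finitely generated module with the help of the Noetherian hypothesis. Write $s=\sup_{v\in U}\barnu(w-v)\in\mathbb{R}\cup\{\infty\}$, so that we must produce $u\in U$ with $\barnu(w-u)=s$ and with $u=0$ or $\barnu(u)=\barnu(w)$. (If $w=0$ or $U=0$ one simply takes $u=0$, although the argument below in fact goes through uniformly in these cases too.)

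First I would fix a convenient generating set. Since $K$ is Noetherian, the $K$-module $\tilde U=U_{\geq0}/U_+$, being a submodule of $\widetilde{\Lambda_K(G)^n}\cong K^n$, is finitely generated; choose $u_1,\ldots,u_k\in U_{\geq0}$ whose images span $\tilde U$, discarding those $u_i$ with $\tilde u_i=0$ so that $\barnu(u_i)=0$ for every $i$. Lemma~\ref{span} then yields $U_{\geq0}=span_{\Lambda_K(G)_{\geq0}}\{u_1,\ldots,u_k\}$, and since for any nonzero $v\in U$ one has $\barnu(v)\in G$ and $T^{-\barnu(v)}v\in U_{\geq0}$, this promotes at once to $U=span_{\Lambda_K(G)}\{u_1,\ldots,u_k\}$ --- precisely the form in which $U$ enters Lemma~\ref{fixedpt}.

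Next I would build the map. Put $V=U+\Lambda_K(G)w$, a $\Lambda_K(G)$-submodule of $\Lambda_K(G)^n$ containing $U$. For nonzero $v\in V$ with $m=\barnu(v)\in G$, the element $T^{-m}v$ lies in $V_{\geq0}$, and I ask whether $\widetilde{T^{-m}v}\in span_K\{\tilde u_1,\ldots,\tilde u_k\}=\tilde U$ inside $K^n$: if so, choose $x_i\in K$ with $\widetilde{T^{-m}v}=\sum_i x_i\tilde u_i$ and set $\phi(v)=v-T^{m}\sum_i x_iu_i$ (an element of $V$ with $\barnu(\phi(v))>m$); otherwise set $\phi(v)=v$; and $\phi(0)=0$. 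By construction $\phi\co V\to V$ satisfies conditions (i) and (ii) of Lemma~\ref{fixedpt} relative to $u_1,\ldots,u_k$. Applying that lemma to the element $v=w\in V$ produces $u\in U$ with $\phi(w-u)=w-u$ and with $u=0$ or $\barnu(u)=\barnu(w)$, which already delivers the second assertion of the lemma.

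It then remains to verify $\barnu(w-u)=s$, which I expect to be the only step requiring genuine care. If $w-u=0$ then $w\in U$, so $s=\infty=\barnu(w-u)$. Otherwise set $m=\barnu(w-u)\in G$ and suppose, toward a contradiction, that some $v'\in U$ satisfies $\barnu(w-v')>m$. Then $v'-u=(w-u)-(w-v')\in U$, and by the non-Archimedean equality $\barnu(v'-u)=\min\{m,\barnu(w-v')\}=m$, so $T^{-m}(v'-u)\in U_{\geq0}$ and hence $\widetilde{T^{-m}(v'-u)}\in\tilde U$. But $T^{-m}(w-u)$ and $T^{-m}(v'-u)$ differ by $T^{-m}(w-v')\in(\Lambda_K(G)^n)_+$, so $\widetilde{T^{-m}(w-u)}=\widetilde{T^{-m}(v'-u)}\in\tilde U$, contradicting the defining property of the fixed point $\phi(w-u)=w-u$. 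Therefore $\barnu(w-u)\geq\barnu(w-v')$ for all $v'\in U$, and since $u\in U$ this forces $\barnu(w-u)=s$, as desired. The main conceptual point is thus to enlarge $U$ to $V=U+\Lambda_K(G)w$ and run Lemma~\ref{fixedpt} there using the original generators $u_i$ (not $w$), so that the fixed points of $\phi$ are exactly the optimal approximations of $w$; the Noetherian hypothesis is used only to ensure that the list $u_1,\ldots,u_k$ is finite.
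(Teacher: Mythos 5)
Your proof is correct and follows essentially the same route as the paper's: use Noetherianness and Lemma \ref{span} to get $\barnu$-level-zero generators $u_1,\ldots,u_k$ of $U$, build a map $\phi$ whose fixed points are exactly the optimal approximations (your membership test $\widetilde{T^{-\barnu(v)}v}\in\tilde U$ is equivalent to the paper's ``no $u\in U$ improves $\barnu$'' test), and invoke Lemma \ref{fixedpt}. The only cosmetic difference is that you run Lemma \ref{fixedpt} on $V=U+\Lambda_K(G)w$ rather than on all of $\Lambda_K(G)^n$ as the paper does; both are legitimate since the lemma only requires $U\leq V$.
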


\begin{proof} As noted earlier, since $\tilde{U}$ is a submodule of the finitely generated module $\widetilde{\Lambda_K(G)^n}\cong K^n$ over the Noetherian ring $K$, there are $u_1,\ldots,u_k\in U$ with $\barnu(u_i)=0$ such that $\tilde{u}_1,\ldots,\tilde{u}_k$ span $\tilde{U}$ over $K$.  So by Lemma \ref{span}, $u_1,\ldots,u_k$ span $U_{\geq 0}$ over $\Lambda_K(G)_{\geq 0}$ (from which it of course follows that they span $U$ over $\Lambda_K(G)$ since any element $v\in U$ satisfies $T^gv\in U_{\geq 0}$ for suitable $g$).  Define a function $\phi\co \Lambda_K(G)^n\to\Lambda_K(G)^n$ as follows.  First, if there is no $u\in U$ with the property that $\barnu(w-u)>\barnu(w)$, set $\phi(w)=w$  (so in particular $\phi(0)=0$).  Suppose now that $w$ is such that such a $u$ does in fact exist. 
 This supposition then amounts to the statement that there is $v(w)\in U$ such that \begin{equation}\label{reducew} \barnu(T^{-\barnu(w)}w-v(w))>0.\end{equation}  Now since $\barnu(T^{-\barnu(w)}w)=0$ this forces $\barnu(v(w))\geq 0$, \emph{i.e.}, $v(w)\in U_{\geq 0}$.  So since the $u_i$ span $U_{\geq 0}$ over $\Lambda_K(G)_{\geq 0}$ there are $y_i=\sum_g y_{i,g}T^g\in \Lambda_K(G)_{\geq 0}$ (where $y_{i,g}\in K$) such that $v(w)=\sum_{i=1}^{k}y_iu_i$.  Now set $v'(w)=\sum_{i=1}^{k}y_{i,0}u_i$.  We then have $\barnu(v(w)-v'(w))>0$, which together with (\ref{reducew}) implies that $\barnu(T^{-\barnu(w)}w-v'(w))>0$.  So set $\phi(w)=w-T^{\barnu(w)}v'(w)$.  This completes the definition of $\phi$; since $v'(w)$ is a $K$-linear combination of the $u_i$ $\phi$ satisfies the hypotheses of Lemma \ref{fixedpt}.  Hence for any $w$ there is $u\in U$ such that $\phi(w-u)=w-u$ and either $u=0$ or $\barnu(u)=\barnu(w)$.  Now $\phi$ was defined in such a way that the only fixed points of $\phi$ are those $w'$ such that $\barnu(w')=\sup_{v\in U}\barnu(w'-v)$.  Hence \[ \barnu(w-u)=\sup_{v\in U}\barnu(w-u-v)=\sup_{v\in U}\barnu(w-v),\] as desired.
\end{proof}

\begin{theorem} \label{alg}  Let $A\in M_{n\times m}(\Lambda_K(G))$ and $\vec{t}=(t_1,\ldots,t_n)\in \mathbb{R}^n$.  For $(a_1,\ldots,a_n)\in \Lambda_K(G)^n $
define \[ \barnu_{\vec{t}}(a_1,\ldots,a_n)=\min_{1\leq i\leq n}(\nu(a_i)-t_i).\]  Then there is $\gamma\in\mathbb{R}$, depending only on $\vec{t},A,$ with the following property.  If $w\in \Lambda_K(G)^n$ then there is $x_0\in \Lambda_K(G)^m$ such that \[ \barnu(x_0)\geq \barnu_{\vec{t}}(w)-\gamma\mbox{  and  } \barnu_{\vec{t}}(w-Ax_0)=\sup_{x\in \Lambda_K(G)^m}\barnu_{\vec{t}}(w-Ax).\]
\end{theorem}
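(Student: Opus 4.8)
The plan is to reduce Theorem \ref{alg} to Lemma \ref{tzero}, which already handles the case $\vec t = 0$ and a single submodule. The first reduction is to dispense with the weight vector $\vec t$. If the $t_i$ all lay in $G$ this would be immediate: the diagonal map $D_{\vec t}$ sending $(a_1,\dots,a_n)$ to $(T^{t_1}a_1,\dots,T^{t_n}a_n)$ is a $\Lambda_K(G)$-linear isomorphism carrying $\barnu_{\vec t}$ to $\barnu$, so one could apply Lemma \ref{tzero} to the submodule $U = D_{\vec t}(A\Lambda_K(G)^m)$ and the element $D_{\vec t}(w)$, then pull back. In general the $t_i$ are arbitrary reals, so instead I would work over the enlarged group $G' = G + \sum_i \mathbb{Z}t_i$, which is still a finitely generated extension of $G$; the Novikov ring $\Lambda_K(G')$ contains $\Lambda_K(G)$, and now $D_{\vec t}$ makes sense as an automorphism of $\Lambda_K(G')^n$. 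The submodule $U' = D_{\vec t}(A\Lambda_K(G')^m) \leq \Lambda_K(G')^n$ is then what we feed to Lemma \ref{tzero}, obtaining a best $\barnu$-approximation $u' \in U'$ to $D_{\vec t}(w)$ with $u' = 0$ or $\barnu(u') = \barnu(D_{\vec t}(w)) = \barnu_{\vec t}(w)$.

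The second, and genuinely new, issue is that Lemma \ref{tzero} produces an element $u'$ of the submodule $U'$, whereas Theorem \ref{alg} demands a preimage $x_0 \in \Lambda_K(G)^m$ (over the \emph{original} group $G$, not $G'$) with a controlled lower bound on $\barnu(x_0)$. So after setting $x' := D_{\vec t}^{-1}(u')$, which lies in $A\Lambda_K(G')^m$ and realizes the supremum of $\barnu_{\vec t}(w - Ax)$ over $x \in \Lambda_K(G')^m$, I must (a) argue that this supremum is not decreased by restricting $x$ to $\Lambda_K(G)^m$, and (b) exhibit an honest preimage $x_0 \in \Lambda_K(G)^m$ of the corresponding element $Ax'$ whose valuation is bounded below in terms of $\barnu_{\vec t}(w)$ alone, with the deficit $\gamma$ independent of $w$. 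For (a): any $x \in \Lambda_K(G')^m$ can be approximated coefficient-by-coefficient by elements of $\Lambda_K(G)^m$ only after one knows $Ax$ forces the relevant coefficients into $G$ — here the cleanest route is to observe that $A$ has entries in $\Lambda_K(G)$, so $A\Lambda_K(G')^m \cap \Lambda_K(G)^n$ is what matters, and in fact $w - Ax' \in \Lambda_K(G')^n$ already, so one shows directly that some $x_0 \in \Lambda_K(G)^m$ with $Ax_0 = Ax'$ exists, or more robustly that the sup over $\Lambda_K(G)^m$ equals the sup over $\Lambda_K(G')^m$ by a density/truncation argument. For (b): this is where finite generation must be used quantitatively. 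The module $A\Lambda_K(G)^m$ is generated over $\Lambda_K(G)$ by the (finitely many) columns of $A$; choosing generators with some fixed minimal valuation $\mu$, any element $y$ of the column module with $\barnu(y) = \beta$ admits a preimage $x_0$ with $\barnu(x_0) \geq \beta - \gamma$ for a constant $\gamma$ depending only on a chosen finite presentation of $A\Lambda_K(G)^m$ (via Lemma \ref{span} applied to $U = A\Lambda_K(G)^m$, whose $\tilde U$ is finitely generated since $K$ is Noetherian: pick $\Lambda_K(G)_{\geq 0}$-module generators $u_1,\dots,u_k$ of $U_{\geq 0}$, each of the form $A\xi_i$ with $\xi_i \in \Lambda_K(G)^m$, and set $\gamma := \max_i \barnu(\xi_i)$, possibly after incorporating the weights $t_i$).

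I expect the main obstacle to be step (b) — producing the preimage $x_0$ with a lower bound on $\barnu(x_0)$ depending only on $A$ and $\vec t$ and not on $w$. The subtlety is that Lemma \ref{tzero} controls the \emph{value} $u'$ but says nothing about how expensive it is to write $u'$ in terms of the columns of $A$; one needs the uniform statement that \emph{every} element of $U = A\Lambda_K(G)^m$ of valuation $\beta$ has a preimage of valuation at least $\beta - \gamma$. This is exactly the content one squeezes out of Lemma \ref{span}: the generators $u_i$ of $U_{\geq 0}$ over $\Lambda_K(G)_{\geq 0}$ have $\barnu(u_i)=0$, so writing $T^{-\barnu(y)}y = \sum_i c_i u_i$ with $c_i \in \Lambda_K(G)_{\geq 0}$ gives $y = \sum_i (T^{\barnu(y)}c_i) u_i = A\bigl(\sum_i T^{\barnu(y)}c_i \xi_i\bigr)$, and $\barnu\bigl(\sum_i T^{\barnu(y)}c_i\xi_i\bigr) \geq \barnu(y) + \min_i \barnu(\xi_i)$, which is the desired bound with $\gamma = -\min_i\barnu(\xi_i)$ (adjusted by $\max_i t_i - \min_i t_i$ to pass between $\barnu$ and $\barnu_{\vec t}$). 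Assembling: apply this to $y = Ax'$, note $\barnu(y) \geq \barnu_{\vec t}(w) - (\max_i t_i - \min_i t_i)$ since $w - y = w - Ax'$ has $\barnu_{\vec t} \geq \barnu_{\vec t}(w)$, and conclude. The only remaining care is to make sure $G'$ versus $G$ does not interfere with this counting — but since the $\xi_i$ and the $u_i$ can be chosen once and for all over $\Lambda_K(G)$, and $D_{\vec t}$ is an isometric rescaling, the constant $\gamma$ indeed depends only on $A$ and $\vec t$.
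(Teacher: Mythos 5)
Your proposal is correct and follows essentially the same route as the paper: rescale the coordinates by $T^{t_i}$ over an enlarged group $G'$ to reduce $\barnu_{\vec{t}}$ to $\barnu$, apply Lemma \ref{tzero} to the image of $A$, extract the uniform constant $\gamma$ from $\Lambda_K(G)_{\geq 0}$-generators of the image's nonnegative part together with chosen preimages, and descend from $G'$ back to $G$ by truncating exponents. The one step you leave vaguest --- why truncating $x$ to its $G$-exponent part does not worsen the approximation --- is resolved in the paper exactly as you anticipate: since $A$ has entries in $\Lambda_K(G)$ and $w\in\Lambda_K(G)^n$, terms with exponents in $G$ and terms with exponents in $G'\setminus G$ can never cancel, so discarding the latter can only increase $\barnu_{\vec{t}}(w-Ax)$ and can only increase $\barnu(x)$.
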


\begin{proof} If $\vec{t}=\vec{0}$, since we have $\barnu_{\vec{0}}=\barnu$, except for the existence of $\gamma$ this is just Lemma \ref{tzero} applied to the submodule $U=A(\Lambda_K(G)^m)$ of $\Lambda_K(G)^n$.  To obtain $\gamma$, as in the proof of Lemma \ref{tzero} let $u_1,\ldots,u_k$ generate $A(\Lambda_K(G)^m)_{\geq 0}$ over $\Lambda_K(G)_{\geq 0}$.  Let $x_1,\ldots,x_k\in \Lambda_K(G)^m$ be such that $Ax_i=u_i$ ($1\leq i\leq k$), and set 
\[ -\gamma=\min_{1\leq i\leq k}\barnu(x_i).\]  Then if $u\in A(\Lambda_K(G)^m)$, so that $T^{-\barnu(u)}u\in A(\Lambda_K(G)^m)_{\geq 0}$, letting $a_i\in \Lambda_K(G)_{\geq 0}$ with $T^{-\barnu(u)}u=\sum a_iu_i$ we have $u=Ax$ where $x=\sum_{i}T^{\barnu(u)}a_ix_i$ satisfies $\barnu(x)\geq \barnu(u)-\gamma$.  So given $w\in \Lambda_K(G)^n$, if $u\in A(\Lambda_K(G)^m)$ is as in the conclusion of Lemma \ref{tzero} then $x_0=x$ as constructed in the previous sentence will have the desired properties.

We now deduce the theorem for general $\vec{t}\in\mathbb{R}^n$ from the already-proven special case that $\vec{t}=0$.

Let $G'$ be any additive subgroup of $\mathbb{R}$ that contains both $G$ and $\{t_1,\ldots,t_n\}$.

Where $\hat{e}_1,\ldots,\hat{e}_n$ is the standard basis for $\Lambda_K(G')^n$, consider the basis $\hat{e}'_1,\ldots,\hat{e}'_n$ for $\Lambda_K(G')^n$ given by $\hat{e}'_i=T^{t_i}\hat{e}_i$.  Viewing $A\in M_{n\times m}(\Lambda_K(G))$ as a matrix with coefficients in the larger Novikov ring $\Lambda_K(G')$, the matrix representing the underlying homomorphism of $A$ with respect to the standard basis for $\Lambda_K(G')^m$ and the new basis $\hat{e}'_1,\ldots,\hat{e}'_n$ for $\Lambda_K(G')^n$ is $A'=MA$ where $M_{ij}=T^{-t_i}\delta_{ij}$.   Now  we have \[ \barnu_{\vec{t}}(\sum w'_i\hat{e}'_i)=\min_{1\leq i\leq n}(\nu(w'_iT^{t_i})-t_i)=\min_{1\leq i\leq n}\nu(w'_i)=\barnu(w'_1,\ldots,w'_n).\]  So for any $x=(x_1,\ldots,x_m)\in \Lambda_K(G')^m$,$w=(w_1,\dots,w_n)\in\Lambda_K(G')^n$ we have \[ \barnu_{\vec{t}}\left(w-Ax\right)=\barnu\left(T^{-t_1}w_1-(MAx)_1,\ldots,T^{-t_n}w_n-(MAx)_n\right).\]

Hence applying the $\vec{t}=\vec{0}$ case of the theorem  to the matrix $MA\in M_{n\times m}(\Lambda_K(G'))$ and the vector $(T^{-t_1}w_1,\ldots,T^{-t_n}w_n)\in \Lambda_K(G')^n$  shows that there is $x_0\in (\Lambda_K(G'))^m$ such that \[ \barnu_{\vec{t}}(w-Ax_0)=\sup_{x\in \Lambda_K(G')^m}\barnu_{\vec{t}}(w-Ax)\]
and $\barnu(x_0)\geq \barnu(T^{-t_1}w_1,\ldots,T^{-t_n}w_n)-\gamma=\barnu_{\vec{t}}(w)-\gamma$.

So all that remains is to show that if $w\in \Lambda_K(G)^n$ then this $x_0$ can be taken to lie in $\Lambda_K(G)^m$.  Now if $x\in \Lambda_K(G')^m$, each coordinate $x_i$ of $x$ has the form \[ x_i=\sum_{g\in G}a_{i,g}T^{g}+\sum_{g\in G'\setminus G}b_{i,g}T{^g};\] write $x'_i=\sum_{g\in G}a_{i,g}T^{g}$ and $x''_i=x_i-x'_i$.  Since $A$ has its coefficients in $\Lambda_K(G)$, each coordinate of  $Ax'$ belongs to $\Lambda_K(G)$, while each coordinate of $Ax''$ has the form $\sum_{g\in G'\setminus G}c_gT^g$.   So if $w\in \Lambda_K(G)^n$, no term in the expansion of any coordinate of $w-Ax'$ can cancel with a term in the expansion of any coordinate of $Ax''$ (for the former only involve exponents in $G$, while the latter only involve exponents in $G'\setminus G$).  In view of this, we have, for each $i$, $\nu((w-Ax')_i)\geq \nu((w-Ax)_i)$, and therefore \[ \barnu_{\vec{t}}(w-Ax')\geq \barnu_{\vec{t}}(w-Ax).\]

So where $x_0$ is as above, $x'_0\in \Lambda_K(G)^m$ will satisfy $\barnu(x'_0)\geq \barnu(x_0)\geq \barnu_{\vec{t}}(w)-K$ and 
\[ \barnu_{\vec{t}}(w-Ax'_0)\geq \barnu_{\vec{t}}(w-Ax_0)=\sup_{x\in \Lambda_K(G')^m}\barnu_{\vec{t}}(w-Ax),\] so since $\Lambda_K(G)^m\leq \Lambda_K(G')^m$ $x'_0$ fulfills the requirements of Theorem \ref{alg}.
\end{proof}
 
\begin{remark}[(A non-Noetherian counterexample)] \label{non-noeth}

We have assumed throughout this section that $K$ is a Noetherian ring; we present now a case in which Theorem \ref{alg} fails for a non-Noetherian $K$.  For some base field $k$, put 
\[ K=\frac{k[a_0,b_0,\ldots,a_n,b_n,\ldots]}{\langle \{a_m b_n|m-n\notin\{0,1\}\}\cup \{a_nb_n-a_0b_0|n\geq 0\}\rangle}.\]

(Incidentally, while this $K$ is not an integral domain, it is not difficult to modify the example we present here to a slightly more complicated one in which $K$ is an integral domain.)

For some additive subgroup $G\leq \mathbb{R}$, choose a sequence $\{\lambda_n\}_{n=1}^{\infty}$ in $G$ such that $\lambda_n\
\nearrow\infty$,  define \[ z=\sum_{n=0}^{\infty}a_nT^{\lambda_n}\in \Lambda_K(G),\] and consider the ideal $\langle z\rangle$ in $\Lambda_K(G)$ generated by $z$.   If \[ w=\sum_{n=0}^{\infty}(a_nT^{\lambda_n}+a_{n+1}T^{\lambda_{n+1}})b_n ,\] note that for any $N$ we have  \[ \left(\sum_{n=0}^{N}b_n\right)z=\sum_{n=0}^{N}(a_nT^{\lambda_n}+a_{n+1}T^{\lambda_{n+1}})b_n\] and so \[ \nu\left(w-\left(\sum_{n=0}^{N}b_n\right)x\right)=\lambda_{N+1}\to\infty \mbox{ as }N\to\infty.\]  However (recalling that $\sum_{n=0}^{\infty}b_n$ is not an element of $\Lambda_K(G)$) it is easily seen that $w\notin \langle z\rangle$.  Thus if $A\co\Lambda_K(G)\to\Lambda_K(G)$ is defined by $Ax=xz$ then the supremum of $\{\barnu(w-Ax)|x\in \Lambda_K(G)\}$ (namely $\infty$) is not attained by any $x$, and the analogue of Theorem \ref{alg} (with $m=n=1$) fails for this choice of $A$. 

One could also ask whether $\sup\{\barnu(w-Ax)|x\in \Lambda_K(G)\}$ could ever be finite and yet fail to be attained.  Of course if $G\leq \mathbb{R}$ is discrete, since $\barnu(w-Ax)$ always belongs to $G$ when it is finite, the supremum is indeed attained in this case.  However, suppose that $G$ is not discrete, and again let $z=\sum_{n=0}^{\infty}a_nT^{\lambda_n}$, with the $\lambda_n$ now chosen to have the property that $\lambda_{n+1}-\lambda_n\nearrow C$ for some finite $C>0$.  We then have \[ b_nT^{-\lambda_n}w=a_0b_0+a_{n+1}b_nT^{\lambda_{n+1}-\lambda_n},\] in view of which (again putting $Ax=xz$) $\sup\{a_0b_0-Ax|x\in\Lambda_K(G)\}$ is equal to $C$ and is not attained.
\end{remark}

\section{Proofs of the main theorems}

Let $\mathfrak{c}$ denote a filtered Floer-Novikov complex as defined in the introduction, with data $S,P,\Gamma,\mathcal{A},\omega,\partial$ as above, giving rise to the Floer complex $C_*(\mathfrak{c})$, which is a module over $\Lambda_{\Gamma,\omega}$. Recall also the function $\ell\co C_*(\mathfrak{c})\to \mathbb{R}\cup\{-\infty\}$, defined above. 

Let $\pi\co P\to S$ be the principal bundle projection in the definition of $\mathfrak{c}$.  Write $S=\{s_1,\ldots,s_n\}$ and choose and fix $p_i\in P$ such that $\pi(p_i)=s_i$.  By definition, then, we have \begin{align*}
C_*(\mathfrak{c})&=\left\{\sum_{i=1}^{n}\left(\sum_{g\in G} b_{g,i}g\right)p_i|b_{g,i}\in R,(\forall C\in\mathbb{R})(\#\{(i,g)|b_{g,i}\neq 0,\mathcal{A}(g\cdot p_i)>C\}<\infty)\right\} \\
& = \left\{\sum_{i=1}^{n}\left(\sum_{g\in G} b_{g,i}g\right)p_i|b_{g,i}\in R,(\forall i)(\forall C\in\mathbb{R})(\#\{g|b_{g,i}\neq 0, \omega(g)<C\}<\infty)\right\}, \end{align*} where we have used the formula $\mathcal{A}(g\cdot p_i)=\mathcal{A}(p_i)-\omega(g)$ from the definition of a filtered Floer-Novikov complex.

This provides us with an identification
\begin{equation}\label{cstar} C_*(\mathfrak{c})\cong\bigoplus_{i=1}^{n}\Lambda_{\Gamma,\omega}\langle p_i \rangle\cong \Lambda_{\Gamma,\omega}^n.\end{equation}  Note that, with respect to this identification, for $(a_1,\ldots,a_n)\in \Lambda_{\Gamma,\omega}^n$, we have \begin{equation}\label{ellform} \ell(a_1,\ldots,a_n)=\max_{1\leq i\leq n}(\mathcal{A}(p_i)-\omega(a_i)),\end{equation} where writing $a_i=\sum_ga_{i,g}g$ we set $\omega(a_i)=\min_{g:a_{i,g}\neq 0}\omega(g).$

We now turn attention to the Novikov ring $\Lambda_{\Gamma,\omega}$.  Since $\omega\co \Gamma\to\mathbb{R}$ is a homomorphism whose domain is a finitely generated abelian group and whose image $G\leq \mathbb{R}$ is torsion free,  the exact sequence $\ker\omega\rightarrowtail\Gamma\twoheadrightarrow  G$ splits and so identifies $\Gamma$ with $\ker\omega\oplus G$.  With respect to this identification, an element of $\Lambda_{\Gamma,\omega}$ is a formal sum of the type $\sum_{g\in G}\sum_{h\in \ker \omega}a_{g,h}s^hT^g$ ($a_{g,h}\in\mathbb{R}$) having the property that for each $C\in\mathbb{R}$ there are only finitely many nonzero $a_{g,h}$ with $g< C$.  This property holds if and only if both (i) For each $C>0$ there are only finitely many $g$ such that \emph{any} $a_{g,h}$ is nonzero and $g<C$, and (ii) For any $g$ there are just finitely many $h$ such that $a_{g,h}\neq 0$, so that, for any $g$, $\sum_h a_{g,h}s^h$ defines an element of the group ring $R[\ker\omega]$.  Thus, setting $K=R[\ker\omega]$, we have \[ \Lambda_{\Gamma,\omega}=\Lambda_K(G).\]  Moreover, setting $\vec{t}=(\mathcal{A}(p_1),\ldots,\mathcal{A}(p_n))$, (\ref{ellform}) gives \[ \ell(a_1,\ldots,a_n)=-\barnu_{\vec{t}}(a_1,\ldots,a_n).\]

Now since $\ker\omega$ is a finitely generated abelian group, $K=R[\ker\omega]$ is the quotient of a polynomial ring on finitely many variables over $R$, so by the Hilbert basis theorem and the fact that quotients of Noetherian rings are Noetherian, $K$ is Noetherian whenever $R$ is.

As such, Theorems \ref{nontriv} and \ref{tight} are now immediate consequences of Theorem \ref{alg}.  Namely, take $m=n$ in Theorem \ref{alg}, and take for $A$ the matrix representing the $\Lambda_K(G)$-module homomorphism $\partial$ with respect to the identification (\ref{cstar}).  By definition, if $a\in H_*(\mathfrak{c})$ and $c_0\in C_*(\mathfrak{c})$ is any representative of the class $a$ we have \[ \rho(a)=\inf\{\ell(c)|[c]=a\}=\inf\{\ell(c_0-\partial h)|h\in C_*(\mathfrak{c})\}=-\sup\{\barnu_{\vec{t}}(c_0-\partial h)|h\in C_*(\mathfrak{c})\}.\]  Theorem \ref{alg} then produces an $h$ attaining this infimum and such that \begin{equation}\label{ellh} -\ell(h)=\barnu_{\vec{t}}(h)\geq \barnu(h)-\max_i\mathcal{A}(p_i)\geq -\ell(c_0)-\gamma-\max_i\mathcal{A}(p_i).\end{equation}  $\alpha=c_0-\partial h$ is then a representative of $a$ satisfying $\rho(a)=\ell(\alpha)$, as required by Theorem \ref{tight}.  In particular, if $\rho(a)=-\infty$ we necessarily have $\ell(\alpha)=-\infty$, so $\partial h=c$, and (\ref{ellh}) gives $\ell(h)\leq \ell(c_0)+M$ where $M=\gamma+\max_i\mathcal{A}(p_i)$, proving Theorem \ref{nontriv}.

\end{document}